\newtheorem{teo}{Theorem}[section]
\newtheorem{theorem}[teo]{Theorem}
\newtheorem{lemma}[teo]{Lemma}
\theoremstyle{definition}
\newtheorem{definition}[teo]{Definition}
\newtheorem{example}[teo]{Example}
\newtheorem{thmnn}{Theorem}
\theoremstyle{remark}
\newtheorem{remark}{Remark}
\numberwithin{figure}{section}
\begin{document}

\title[Lefschetz theorems for acyclic categories]{Lefschetz fixed-object and fixed-morphism theorems for acyclic categories}
\thanks{The authors were partially supported by  Xunta de Galicia ED431C 2019/10 with FEDER funds and by MINECO-Spain research project PID2020-MTM-114474GB-100.
}


\author[S. Castelo]{%
	Samuel Castelo Mourelle
}

 \address{%
           	Samuel Castelo Mourelle
            \\
              Departamento de Matem\'aticas, Universidade de Santiago de Compostela, 15782-SPAIN}
               \email{samuel.castelo@rai.usc.es}

\author[E. Mac\'ias]{%
	Enrique Macías-Virgós 
}

 \address{%
           	Enrique Macías-Virgós \\
              CiTMAga, Departamento de Matem\'aticas, Universidade de Santiago de Compostela, 15782-SPAIN}
               \email{quique.macias@usc.es}

\author[D. Mosquera]{%
	David Mosquera-Lois 
}

 \address{%
           	David Mosquera-Lois \\
              Departamento de Matemáticas, Universidade de Vigo, SPAIN }
               \email{david.mosquera.lois@usc.es \\
               david.mosquera.lois@uvigo.es}


\begin{abstract} We introduce two novel complementary notions of the Lefschetz number for a functor from a finite acyclic category to itself and we prove a Lefschetz fixed-object theorem and a Lefschetz fixed-morphism theorem. 
\end{abstract}

\keywords{acyclic category, Lefschetz number, Lefschetz fixed-point theorem}

\subjclass[2020]{
Primary: 55M20 
Secondary: 55U10
}

\maketitle


\section*{Introduction}

Let $X$ be a finite simplicial structure, possibly more general than a simplicial complex. The Lefschetz number of a simplicial map  $f\colon X\to X$ is defined as
\begin{equation}\label{eq:def_lefschetz}
    \mathcal{L}(f)=\sum_{i \ge 0} (-1)^i \mathrm{trace}(H_i(f)),
\end{equation} where $H_i(f): H_i(X) \to H_i(X)$ denotes the induced morphism in homology with rational coefficients.

The Lefschetz number stands as a crucial invariant within Algebraic Topology. Beyond its role as an extension of the Euler-Poincaré characteristic, $\chi(X)=L(\textup{id}_X)$, the Lefschetz number serves as a connecting link between Topology and Dynamics. It ensures the fulfillment of fixed-point theorems, establishing a significant bridge between these two mathematical domains. 

In 1979 Backlawski and Björner proved a Lefschetz fixed point theorem for posets, which play an important role as objects of study in Combinatorial Algebraic Topology:
\begin{thmnn}[Lefschetz fixed point theorem for posets 
 \protect{\cite[Theorem 1.1]{Bjorner}}]\label{lefeuler}
    Let $X$ be a finite poset and $f\colon X\rightarrow X$ an order-preserving map. Then
    \[\mathcal{L}(f)=\chi(X^f).\]
    Moreover, if $\mathcal{L}(f)\neq 0$, then there exists $x\in X$ such that $f(x)=x$, that is to say there exists at least one fixed point of $f$. 
\end{thmnn}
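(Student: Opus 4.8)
The plan is to transport $f$ to the order complex and invoke the Hopf trace formula. First I would record that, since $f$ is order-preserving, it induces a simplicial self-map $\Delta f\colon \Delta X\to \Delta X$ on the order complex (equivalently, the nerve of $X$ regarded as a category): a chain $x_0<\cdots<x_k$ is sent to the chain spanned by $f(x_0)\le\cdots\le f(x_k)$. By functoriality $H_i(\Delta f)=H_i(f)$, so the Lefschetz number in \eqref{eq:def_lefschetz} is the one of $\Delta f$. Because $X$ is finite, the rational simplicial chain complex $C_\bullet(\Delta X;\mathbb{Q})$ is finite-dimensional in each degree and vanishes in all but finitely many degrees, so the Hopf trace formula applies and gives
\[
\mathcal{L}(f)=\sum_{i\ge 0}(-1)^i\,\mathrm{trace}\bigl((\Delta f)_\#\colon C_i(\Delta X;\mathbb{Q})\to C_i(\Delta X;\mathbb{Q})\bigr).
\]

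The crux is then a purely combinatorial evaluation of the trace on chains. Taking as basis of $C_i$ the $i$-simplices with their canonical (increasing) orientation, the diagonal coefficient of $\sigma=(x_0<\cdots<x_i)$ in $(\Delta f)_\#(\sigma)$ is $0$ unless $\Delta f$ maps $\sigma$ onto $\sigma$ as an ordered simplex, in which case — since $f$ is order-preserving, so no reordering sign intervenes — it equals $1$. Here I would prove the one substantive lemma: if $\Delta f(\sigma)=\sigma$, then $f$ fixes each vertex $x_0,\dots,x_i$. Indeed $f$ then restricts to an order-preserving surjection of the finite totally ordered set $\{x_0,\dots,x_i\}$ onto itself, hence to an order-preserving bijection, hence to the identity. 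Therefore $\mathrm{trace}\bigl((\Delta f)_\#$ on $C_i\bigr)$ counts exactly the $i$-simplices all of whose vertices are fixed, i.e.\ the $i$-simplices of the full subcomplex $\Delta(X^f)$, where $X^f=\{x\in X: f(x)=x\}$.

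Substituting back yields $\mathcal{L}(f)=\sum_i(-1)^i\,\#\{i\text{-simplices of }\Delta(X^f)\}=\chi(\Delta(X^f))=\chi(X^f)$, which is the first assertion. For the last statement, note $\chi(\emptyset)=0$, so $\mathcal{L}(f)\neq 0$ forces $\Delta(X^f)\neq\emptyset$, i.e.\ $X^f\neq\emptyset$, so $f$ has a fixed object. I expect the only non-formal step to be the lemma identifying ``simplex fixed setwise'' with ``simplex fixed pointwise''; this is precisely where the order-preserving hypothesis enters, and it is what makes the fixed-\emph{object} poset $X^f$ the correct invariant — the analogous identity fails for a general simplicial map, where a simplex may be fixed while its vertices are permuted, and this is exactly the phenomenon one has to handle carefully when passing to acyclic categories.
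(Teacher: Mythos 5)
Your argument is correct and complete: the reduction to the chain level via the Hopf trace formula, the observation that the diagonal entry of $\sigma=(x_0<\cdots<x_i)$ is nonzero exactly when $\Delta f$ fixes $\sigma$, and the key lemma that an order-preserving surjection of a finite chain onto itself is the identity (so setwise-fixed simplices are pointwise-fixed) together give $\mathcal{L}(f)=\chi(\Delta(X^f))=\chi(X^f)$, with the fixed-point statement following from $\chi(\emptyset)=0$. The paper itself does not prove this statement --- it imports it from Backlawski--Bj\"orner \cite{Bjorner} --- and your proof is essentially the standard argument given there, so there is nothing to compare beyond noting that your closing remark (that ``fixed setwise'' vs.\ ``fixed pointwise'' is the step that must be revisited for acyclic categories) is exactly the point the paper's Lemma \ref{caraabixo} addresses.
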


Nowadays, acyclic (or loop-free) categories are gaining much importance in Combinatorial Algebraic Topology (see for example \cite{Babson, Kozlov, May, Delucchi, Delucchi2, Tanaka}).

The purpose of this short note is to extend Theorem \ref{lefeuler} to finite acyclic categories. We define two complementary notions of the Lefschetz number for a functor $F\colon C \rightarrow C$ from a finite acyclic category to itself and we prove a Lefschetz fixed-object theorem (Theorem \ref{teoremagrande}) and a Lefschetz fixed-morphism theorem (Theorem \ref{fixedmorph}) for such a functor.



\section{Background}
We begin by introducing some notions regarding acyclic categories and fixing the notation. For a detailed exposition we refer the reader to \cite{Kozlov, May}.

 A \textit{(finite) acyclic category} or \textit{(finite) loop-free category} is a (finite) small category (the collection of arrows is a (finite) set) such that only the identity morphisms have inverses and the unique morphism from an object to itself is the identity. We will represent and denote the acyclic categories by omitting the identity arrows. The acyclic categories along with the functors between them form a category, which we call \textit{AC}. All acyclic categories we work with will be assumed to have a finite number of morphisms.


Posets are examples of acyclic categories. For a poset $(X,\leq)$, the objects are the elements of the poset and there is a single morphism between two objects $x$ and $y$ if and only if $x\leq y$. As a consequence, we can see the category of posets and order-preserving maps between them, which we denote $Poset$, as a subcategory of $AC$. Moreover, for an acyclic category $C$, there is a partial order on the set of objects of $C$ given by $x\leq y$ if and only if there is an arrow from $x$ to $y$. The resulting poset is denoted by $R(C)$.  Let $C$ and $D$ be acyclic categories and let $F\colon C\rightarrow D$ be a functor. The map,
    $R(F)\colon R(C)\rightarrow R(D)$ 
    defined by  $R(F)(x)=F(x)$ is an order-preserving map between posets. Therefore, $R\colon \textup{ACat} \to \text{Posets}$ is a functor.
    
Let $C$ be an acyclic category. The \textit{nerve} of $C$, denoted $\Delta (C)$, is a regular trisp (polyhedral complex whose cells are simplices, also referred as generalized simplicial complex or regular $\Delta-$complex depending on the literature) defined as follows (see \cite[Definition 10.4.]{Kozlov} and \cite[Definition 2.48]{Kozlov}):
  \begin{itemize}
        \item The set of vertices is the set of objects of $C$.
        \item For $k\geq 1$, there is one $k$-simplex $\sigma$ for each chain of $k$ non-identity composable morphisms $x_0\overset{\alpha_1}{\rightarrow}x_1\overset{\alpha_2}{\rightarrow}\dots\overset{\alpha_k}{\rightarrow} x_k$. 
    \end{itemize}

Intuitively, regular trisps generalize simplicial complexes by allowing several different simplices to share all its vertices. Let $C$ and $D$ be two acyclic categories and let $F\colon C\rightarrow D$ be a functor. There is a regular trisp map $\Delta(F)\colon \Delta(C)\rightarrow \Delta(D)$ given as follows. For a composable morphism chain $x_0\overset{\alpha_1}{\rightarrow}x_1\overset{\alpha_2}{\rightarrow}\dots\overset{\alpha_k}{\rightarrow} x_k$ in $\Delta(C)$, $\Delta(F)$ maps it to the composable morphism chain $F(x_0)\overset{F(\alpha_1)}{\rightarrow} F(x_1)\overset{F(\alpha_2)}{\rightarrow}\dots\overset{F(\alpha_k)}{\rightarrow} F(x_k)$ in $\Delta(D)$. We use $\textup{RT}$ to denote the category of regular trisps.

We recall the \textit{face poset} functor $\mathcal{F}\colon RT \to Posets$ for the particular case of regular trisps and regular trisps maps coming from acyclic categories by the functor $\Delta$ since they are the only ones we will consider.

For a regular trisp $\Delta(C)$, $\mathcal{F}(\Delta(C))$ is the poset of simplices of $\Delta(C)$ ordered by inclusion: $\beta \leq \tau$ if $\beta$ is a face of $\tau$. For $\Delta(F)\colon \Delta(C)\rightarrow \Delta(D)$, $\mathcal{F}(\Delta(F))\colon \mathcal{F}(\Delta(C))\rightarrow \mathcal{F}(\Delta(D))$ maps the $k$-simplex (seen as an element of $\mathcal{F}(\Delta(C))$) $x_0\overset{\alpha_1}{\rightarrow}x_1\overset{\alpha_2}{\rightarrow}\dots\overset{\alpha_k}{\rightarrow} x_k$ in $\Delta(C)$, to $F(x_0)\overset{F(\alpha_1)}{\rightarrow} F(x_1)\overset{F(\alpha_2)}{\rightarrow}\dots\overset{F(\alpha_k)}{\rightarrow} F(x_k)$ in $\mathcal{F}(\Delta(D))$.

We define the subdivision functor for acyclic categories $\textup{sd}\colon ACat \to Posets$ as the composition $\textup{sd}=\mathcal{F}\circ \Delta$.

 We summarize these functors we will use in this work in Diagram \ref{eq:contexto}.
\begin{equation}
	\label{eq:contexto}
	\begin{tikzcd}
		\text{ACat}  \arrow[rr,"{\mathrm{sd}}", swap] \arrow[dr,"{\Delta}",  swap] & & \text{Posets} \arrow[ll,"{i}",,hook,  shift right=1ex, swap]  \\
		 & \text{RT}  \arrow[ur,"{\mathcal{F}}", swap]        
	\end{tikzcd}
\end{equation}

The homology of the acyclic category $C$, which we denote by $H(C)$ is just the homology $H(\Delta(C))$ of its nerve. Moreover, the face poset functor (consequently the subdivision functor) induces isomorphisms at the level of homology.

We introduce some notation. For an acyclic category $C$, we denote by $O$ the set of objects and by $M$ the set of morphisms. For a functor  $F\colon C\rightarrow C$, we consider the pair of maps between objects and between arrows: $F_O\colon O(C)\rightarrow O(C)$ and $F_M\colon M(C)\rightarrow M(C)$.
\begin{itemize}
    \item The \textit{fixed-object set} of $F\colon C \rightarrow C$ is the subset $$O(C)^F=\lbrace x\in O(C)\colon F_O(x)=x\rbrace\subseteq O(C).$$
    \item The \textit{fixed-arrow set} of $F\colon C \rightarrow C$ is the subset, $$M(C)^F=\lbrace \alpha\in M(C)\colon F_M(\alpha)=\alpha\rbrace\subseteq M(C).$$
    \item The \textit{fixed subcategory} of $F\colon C \rightarrow C$ is the acyclic category $$C^F=(O(C)^F,M(C)^F).$$
\end{itemize}

        
    




\section{The Lefschetz fixed-object theorem}
We introduce two novel complementary notions of Lefschetz number for a functor $F\colon C \rightarrow C$ between finite acyclic categories and we prove a Lefschetz fixed-object theorem.
\begin{definition}
       Let $C$ be a finite acyclic category and let  $F\colon C \rightarrow C$ be a functor. We define its \textit{Lefschetz number} as
     $\mathcal{L}(F)=\mathcal{L}(\Delta(F))$
     and its \textit{R-Lefschetz number}  as
    $\mathcal{L}_R(F)=\mathcal{L}(R(F))$.
\end{definition}

\begin{remark}\label{igualesacy}
    Observe that for a functor between acyclic categories $F\colon C \rightarrow C$, $\mathcal{L}(F)=\mathcal{L}(\textup{sd}(F))$ and that for an order-preserving map between posets  $F\colon P \rightarrow P$, $\mathcal{L}(F)=\mathcal{L}_R(F)$.
\end{remark}

\begin{theorem}[Lefschetz fixed-object theorem] \label{teoremagrande}
    Let $C$ be a finite acyclic category and $F\colon C \rightarrow C$ be a functor. Then:
    \begin{enumerate}
        \item[(a)] $\mathcal{L}_R(F)=\chi(R(C)^{R(F)}).$
        \item[(b)] $\mathcal{L}(F)=\chi(C^F).$
    \end{enumerate}
    Moreover, if $\mathcal{L}_R(F)\neq 0$ or $\mathcal{L}(F)\neq 0$, then there exists at least one object $x\in O(C)$ such that $F(x)=x$, i.e. $O(C)^F\neq \emptyset$.
\end{theorem}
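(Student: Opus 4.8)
The plan is to reduce both statements to the classical Lefschetz fixed-point theorem for finite posets (the Baclawski–Björner theorem quoted in the introduction) by passing through the functors $R$ and $\mathrm{sd} = \mathcal{F}\circ\Delta$ recorded in Diagram~\eqref{eq:contexto}. For part (a), note that $R(F)\colon R(C)\to R(C)$ is an order-preserving self-map of a finite poset, so the poset theorem gives immediately $\mathcal{L}(R(F)) = \chi\bigl(R(C)^{R(F)}\bigr)$; by definition $\mathcal{L}_R(F) = \mathcal{L}(R(F))$, which is exactly (a). For part (b) I would use Remark~\ref{igualesacy}: $\mathcal{L}(F) = \mathcal{L}(\mathrm{sd}(F))$, and $\mathrm{sd}(F)\colon \mathrm{sd}(C)\to\mathrm{sd}(C)$ is again an order-preserving self-map of a finite poset, so the poset theorem yields $\mathcal{L}(\mathrm{sd}(F)) = \chi\bigl(\mathrm{sd}(C)^{\mathrm{sd}(F)}\bigr)$. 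The key remaining identity is then
\[
\chi\bigl(\mathrm{sd}(C)^{\mathrm{sd}(F)}\bigr) = \chi(C^F),
\]
after which (b) follows.

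The main work — and the step I expect to be the real obstacle — is establishing this last Euler-characteristic identity, i.e.\ understanding the fixed subposet of $\mathrm{sd}(F)$ concretely. An element of $\mathrm{sd}(C) = \mathcal{F}(\Delta(C))$ is a chain $x_0\xrightarrow{\alpha_1}x_1\to\cdots\xrightarrow{\alpha_k}x_k$ of non-identity composable morphisms, and $\mathrm{sd}(F)$ sends it to $F(x_0)\xrightarrow{F(\alpha_1)}\cdots\xrightarrow{F(\alpha_k)}F(x_k)$. Because $C$ is acyclic, two parallel composable chains with the same source, target and the same ordered tuple of morphisms must coincide; hence such a chain is fixed by $\mathrm{sd}(F)$ precisely when $F(\alpha_i) = \alpha_i$ for every $i$ (which forces $F(x_j)=x_j$ for every vertex $x_j$ as well, since the $\alpha_i$ are non-identities and their sources/targets are determined). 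Therefore the fixed subposet $\mathrm{sd}(C)^{\mathrm{sd}(F)}$ is exactly the set of chains of non-identity composable morphisms all of whose arrows lie in $M(C)^F$ — but that is precisely the poset of simplices of $\Delta(C^F)$, i.e.\ $\mathrm{sd}(C)^{\mathrm{sd}(F)} = \mathrm{sd}(C^F)$ as posets. Here I must check carefully that $C^F = (O(C)^F, M(C)^F)$ is itself a well-defined (acyclic) subcategory — closure under composition holds because $F$ is a functor, so if $F$ fixes $\alpha$ and $\beta$ it fixes $\beta\circ\alpha$ — so that $\Delta(C^F)$ makes sense.

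Granting $\mathrm{sd}(C)^{\mathrm{sd}(F)} = \mathrm{sd}(C^F)$, we get $\chi\bigl(\mathrm{sd}(C)^{\mathrm{sd}(F)}\bigr) = \chi(\mathrm{sd}(C^F)) = \chi(\Delta(C^F)) = \chi(C^F)$, using that $\mathrm{sd}$ (equivalently the face-poset functor) preserves homology and hence the Euler characteristic, and that $\chi(C^F)$ is by definition $\chi(\Delta(C^F))$. This completes (b). Finally, for the fixed-object conclusion: if either Lefschetz number is nonzero, then by (a) or (b) the corresponding Euler characteristic $\chi(R(C)^{R(F)})$ or $\chi(C^F)$ is nonzero, hence that fixed subcategory (resp.\ subposet) is nonempty; in either case it has at least one object, and an object of $R(C)^{R(F)}$ or of $C^F$ is by definition an object $x\in O(C)$ with $F(x)=x$, so $O(C)^F\neq\emptyset$. (One should remark that a nonempty finite poset or acyclic category can perfectly well have Euler characteristic $0$, so neither converse holds — only the stated implication — but that is consistent with the classical case.)
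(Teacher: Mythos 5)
Your proposal is correct and follows essentially the same route as the paper: part (a) is reduced directly to the Baclawski--Bj\"orner poset theorem via $R$, and part (b) via the same chain $\mathcal{L}(F)=\mathcal{L}(\mathrm{sd}(F))=\chi(\mathrm{sd}(C)^{\mathrm{sd}(F)})=\chi(\mathrm{sd}(C^F))=\chi(C^F)$, using homology invariance of $\mathrm{sd}$, with the same derivation of the fixed-object conclusion. The only (cosmetic) difference is that where the paper invokes Lemma \ref{caraabixo} (all faces of a fixed simplex are fixed) to obtain $\mathrm{sd}(C)^{\mathrm{sd}(F)}\subseteq\mathrm{sd}(C^F)$, you argue directly that a chain fixed by $\mathrm{sd}(F)$ must have every morphism $\alpha_i$ fixed by $F$, which is an equivalent observation.
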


Theorem \ref{teoremagrande} depends on a crucial observation. Consider Figure \ref{barycencatar2}. We represent a functor $F\colon C\rightarrow C$ between acyclic categories and the subdivision functor  $\textup{sd}(F)\colon \textup{sd}(C)\to \textup{sd}(C)$. We see that if $\textup{sd}(F)$ fixes $(\gamma,\beta) \in \textup{sd}(C)$, then  $\textup{sd}(F)$ fixes all elements below $(\gamma,\beta)$ in $\textup{sd}(C)$. 
\begin{figure}[h]
        \centering
        \includegraphics[width=1\textwidth]{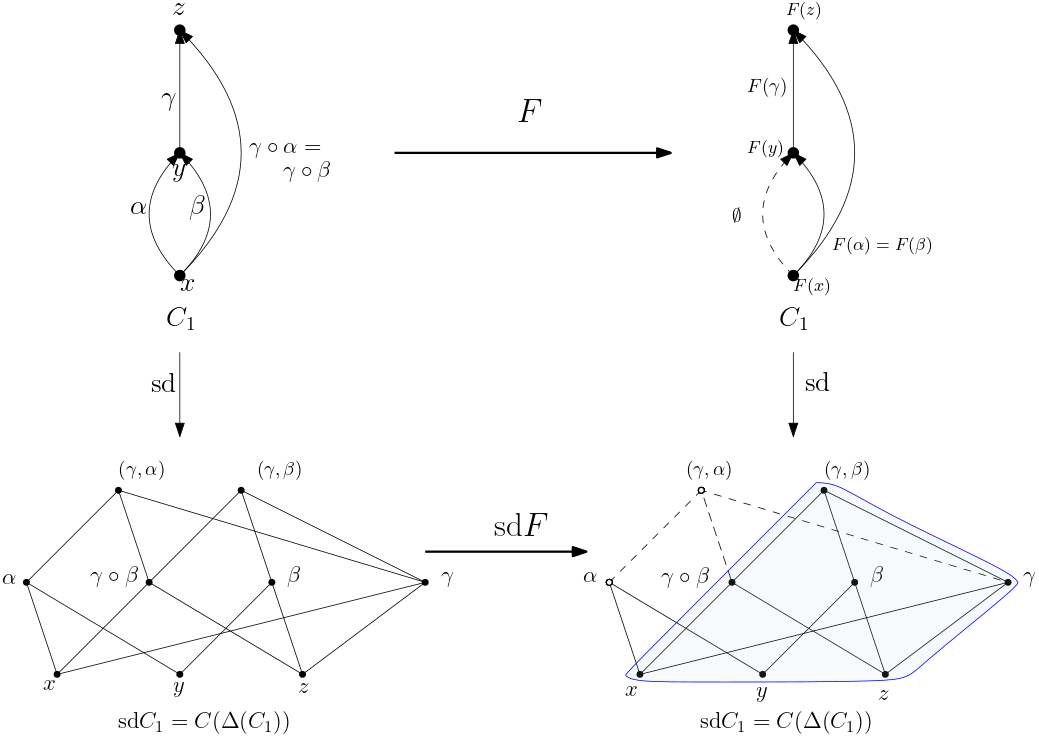}
        \caption{A functor $F\colon C\rightarrow C$ between acyclic categories and the subdivision functor  $\textup{sd}(F)\colon \textup{sd}(C)\to \textup{sd}(C)$.}
        \label{barycencatar2}
    \end{figure}

This is not particular to this concrete example but a general fact. Let us state this as a lemma.

\begin{lemma}\label{caraabixo}
   Let $C$ be a finite acyclic category and let $F\colon C\rightarrow C$ be a functor.   If  $\textup{sd}(F)\colon \textup{sd}(C)\to \textup{sd}(C)$ fixes an object $\sigma\in \textup{sd}(C)$ ($\textup{sd}(F)(\sigma)=\sigma$), then $\textup{sd}(F)(\tau)=\tau$ for all $\tau \leq \sigma$. 
\end{lemma}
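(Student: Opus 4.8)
The plan is to unravel the definition $\textup{sd}=\mathcal{F}\circ\Delta$ and reduce the statement to an elementary computation with chains of morphisms. Recall that an object of $\textup{sd}(C)=\mathcal{F}(\Delta(C))$ is a simplex of the nerve, i.e.\ a chain
\[
\sigma=\bigl(x_0\overset{\alpha_1}{\to}x_1\overset{\alpha_2}{\to}\cdots\overset{\alpha_k}{\to}x_k\bigr)
\]
of non-identity composable morphisms of $C$, and, by the description of $\mathcal{F}(\Delta(F))$ recalled above, $\textup{sd}(F)$ sends $\sigma$ to $\bigl(F(x_0)\overset{F(\alpha_1)}{\to}\cdots\overset{F(\alpha_k)}{\to}F(x_k)\bigr)$. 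Since a simplex of the nerve is literally a chain of this form, two of them are equal exactly when their lists of objects and their lists of morphisms coincide. Hence the first step is to observe that the hypothesis $\textup{sd}(F)(\sigma)=\sigma$ forces $F(x_i)=x_i$ for all $0\le i\le k$ and $F(\alpha_j)=\alpha_j$ for all $1\le j\le k$.

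For the second step, let $\tau\le\sigma$ in $\textup{sd}(C)$, that is, $\tau$ is a face of $\sigma$. Every face of the simplex $\sigma$ is a sub-chain of $\sigma$: it is supported on vertices $x_{i_0},\dots,x_{i_m}$ for some $0\le i_0<\cdots<i_m\le k$, with the arrow from $x_{i_{l-1}}$ to $x_{i_l}$ being the composite of the intermediate $\alpha$'s. Applying $\textup{sd}(F)$ and using functoriality of $F$ together with the equalities from the first step, each such vertex and each such composite is fixed by $F$, so $\textup{sd}(F)(\tau)=\tau$. Alternatively one can argue more abstractly: $\textup{sd}(F)$ is order-preserving, hence $\textup{sd}(F)(\tau)$ is again a face of $\sigma$; its vertex set equals that of $\tau$ because $F$ fixes every vertex of $\sigma$; and since $C$ is acyclic no object repeats inside the chain $\sigma$, so a face of $\sigma$ is determined by its set of vertices, whence $\textup{sd}(F)(\tau)=\tau$.

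I do not anticipate a genuine obstacle here: once the functors $\Delta$ and $\mathcal{F}$ are unwound, the statement is a bookkeeping exercise. The two points that deserve a little care are (i) the fact that in an acyclic category a chain of non-identity morphisms cannot repeat an object — if $x_i=x_j$ with $i<j$, the composite $\alpha_j\circ\cdots\circ\alpha_{i+1}$ would be an endomorphism, hence the identity, forcing $\alpha_{i+1}$ to be invertible and so an identity, a contradiction — which is what makes a face of $\sigma$ determined by its vertices; and (ii) that applying $F$ to a sub-chain of $\sigma$ cannot accidentally introduce identity morphisms, which is again automatic here since $F$ fixes the pairwise distinct vertices of $\sigma$. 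These observations also explain why the lemma is exactly what is needed for Theorem~\ref{teoremagrande}: a fixed simplex carries with it an entire fixed down-set.
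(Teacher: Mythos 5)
Your proof is correct and takes essentially the same approach as the paper's: both reduce the claim to the observation that $\textup{sd}(F)(\sigma)=\sigma$ forces $F$ to fix every object and every arrow in the chain, and then use functoriality of $F$ to see that the composite arrows appearing in a face are also fixed. The only (immaterial) difference is that the paper iterates over codimension-one faces while you treat an arbitrary face of $\sigma$ directly; your auxiliary remarks on the non-repetition of objects in a chain are correct but not needed for the main line of the argument.
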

\begin{proof}
    Consider $\sigma\in \textup{sd}(C)$, $\sigma=x_0\overset{\alpha_1}{\rightarrow}x_1\overset{\alpha_2}{\rightarrow}\dots\overset{\alpha_k}{\rightarrow} x_k$.  If  $\textup{sd}(F)\colon \textup{sd}(C)\to \textup{sd}(C)$ fixes $\sigma\in \textup{sd}(C)$, then it fixes $x_1\overset{\alpha_2}{\rightarrow}x_2\overset{\alpha_2}{\rightarrow} \dots \overset{\alpha_k}{\rightarrow} x_k$, $x_0\overset{\alpha_1}{\rightarrow}x_1 \dots \overset{\alpha_{k-1}}{\rightarrow} x_{k-1}$ and $x_0\overset{\alpha_1}{\rightarrow}x_1 \dots x_{j-1}\overset{\alpha_{j+1}\circ \alpha_j}{\longrightarrow} x_{j+1} \dots\overset{\alpha_k}{\rightarrow} x_k$ for any $1\leq j\leq k-1$. By applying this argument iteratively a finite number of times it holds that for all $\tau \leq \sigma$, $\textup{sd}(F)(\tau)=\tau$.
\end{proof}

\begin{proof}[Proof of Theorem \ref{teoremagrande}]
    First of all, observe that (a) is a direct consequence of Theorem \ref{lefeuler}.  Let us prove (b). It is enough to check the following chain of equalities:
     \[\mathcal{L}(F)\overset{(1)}{=}\mathcal{L}(\textup{sd}(F))\overset{(2)}{=}\chi(\textup{sd}(C)^{\textup{sd}(F)})\overset{(3)}{=}\chi(\textup{sd}(C^F))\overset{(4)}{=}\chi(C^F).\]
     Equality (1) follows from Remark \ref{igualesacy}. Equality (2) is consequence of Theorem \ref{lefeuler}. Equality (4) follows from the homological invariance of Euler-Poincaré characteristic. It remains to prove Equality (3). Suppose $\sigma\in\textup{sd}(C)^{\textup{sd}(F)} $, where  $\sigma=x_0\overset{\alpha_1}{\rightarrow}x_1\overset{\alpha_2}{\rightarrow}\dots\overset{\alpha_k}{\rightarrow} x_k$. By Lemma \ref{caraabixo}, $\sigma\in \textup{sd}(C^F)$. Conversely, assume that $\sigma\in \textup{sd}(C^F)$ where $\sigma=x_0\overset{\alpha_1}{\rightarrow}x_1\overset{\alpha_2}{\rightarrow}\dots\overset{\alpha_k}{\rightarrow} x_k$. By construction, \begin{align*}
         \textup{sd}(F)(x_0\overset{\alpha_1}{\rightarrow}x_1\overset{\alpha_2}{\rightarrow}\dots\overset{\alpha_k}{\rightarrow} x_k)&=F(x_0)\overset{F(\alpha_1)}{\rightarrow}F(x_1)\overset{F(\alpha_2)}{\rightarrow}\dots\overset{F(\alpha_k)}{\rightarrow} F(x_k)\\
         &=x_0\overset{\alpha_1}{\rightarrow}x_1\overset{\alpha_2}{\rightarrow}\dots\overset{\alpha_k}{\rightarrow} x_k.
     \end{align*} 
     Let us prove now the last part of the theorem. Observe that the elements of $R(C)$ coincide with the objects of $C$ while $F$ and $R(F)$ fix the same objects. Therefore, by Theorem \ref{lefeuler}, if $\mathcal{L}_R(F)\neq 0$, then there exists at least one object $x\in O(C)$ such that $F(x)=x$. 
     Assume now that $\mathcal{L}(F)\neq 0$. Then, by Theorem \ref{lefeuler} there exists  $\sigma\in\textup{sd}(C)^{\textup{sd}(F)}$, $\sigma=x_0\overset{\alpha_1}{\rightarrow}x_1\overset{\alpha_2}{\rightarrow}\dots\overset{\alpha_k}{\rightarrow} x_k$, such that $\textup{sd}(F)(\sigma)=\sigma$. By Lemma \ref{caraabixo}, $\textup{sd}(F)(x_0)=x_0$.  Finally, $F(x_0)=\textup{sd}(F)(x_0)$ by definition, which finishes the proof.
\end{proof}

We show that for a functor $F\colon C\rightarrow C$ between acyclic categories,  $\mathcal{L}(F)$ and $\mathcal{L}_R(F)$ provide complementary information regarding the fixed objects of $F\colon C\rightarrow C$.

\begin{example}
Consider the acyclic categories $C$ and $D$ depicted in Figure \ref{ex:funtores}:
\begin{figure}[h]
        \centering
        \includegraphics[width=0.7\textwidth]{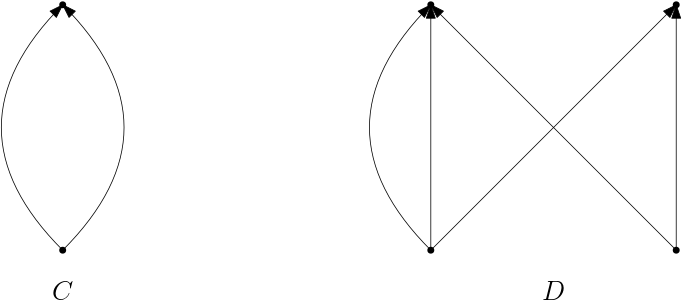}
        \caption{Categories $C$ and $D$.}
        \label{ex:funtores}
\end{figure}
We define functors $F\colon C\to C$ and $G\colon D\to D$ 
 which are the identity on each category. Therefore, they fix the respective categories (both arrows and objects).  We have $\mathcal{L}_R(F)=\chi(R(C))=2-1=1$, $\mathcal{L}(F)=\chi(C)=2-2=0$, $\mathcal{L}_R(G)=\chi(R(D))=4-4=0$ and $\mathcal{L}(G)=\chi(D)=4-5=-1$.
\end{example}

\section{Lefschetz fixed-morphism theorem}
We begin by fixing some notation in order to state and prove the Lefschetz fixed-morphism theorem. Let $P$ be a poset. A morphism is {\em indecomposable} if it cannot be represented as a composition of two nonidentity morphisms. The poset $P$ is {\em graded} if there is a function $\textup{deg}\colon P \to \mathbb{Z}$  such that whenever $\alpha< \beta$ is an indecomposable morphism, we have $\textup{deg}(\beta) = \textup{deg}(\alpha) + 1$.  Moreover, the {\em degree} of $\alpha$, denoted, $\textup{deg}(\alpha)=i$, coincides with the maximum of the lengths of the chains $\alpha_0<\alpha_1<\cdots<\alpha$  where $\alpha_0$ is a minimal element of $P$. 

Let $P$ be a graded poset.  We say that the element $\alpha$ belongs to its \textit{i-th layer}, denoted $\alpha\in P_i$, if  $\textup{deg}(\alpha)=i$. We define $P^{\hspace{1mm}\widehat{i}}$ as the deletion of the $i$-th layer from the poset, that is, the subposet whose elements are $P\setminus P_i.$
     Analogously, we define $P^{\hspace{1mm}\widehat{\leq i}}$  as the deletion of all the $j$-th layers with $j\leq i$ from the poset $P$.

     Let $F\colon P\rightarrow P$ be an order-preserving map. Then $F$ induces order-preserving maps 
     \[F^{\hspace{1mm}\widehat{i}}\colon P^{\hspace{1mm}\widehat{i}}\setminus F^{-1}(P_i)\rightarrow P^{\hspace{1mm}\widehat{i}}\setminus F^{-1}(P_i)\]
     and 
     \[F^{\hspace{1mm}\widehat{\leq i}}\colon P^{\hspace{1mm}\widehat{\leq i}}\setminus F^{-1}\left(\bigcup_{j\leq i}P_j\right)\rightarrow P^{\hspace{1mm}\widehat{\leq i}}\setminus F^{-1}\left(\bigcup_{j\leq i}P_j\right)\label{defmal}\] 
     given by $F^{\hspace{1mm}\widehat{i}}(\alpha)=F(\alpha)$ and $F^{\hspace{1mm}\widehat{\leq i}}(\alpha)=F(\alpha)$, respectively. 
Finally, let us denote 

\[{\textup{sd}}_F^{\hspace{1mm}\widehat{\leq i}}=\textup{sd}(C)^{\hspace{1mm}\widehat{\leq i}}\setminus (\textup{sd}({F)^{\hspace{1mm}\widehat{\leq i}}})^{-1} \left( \bigcup_{j\leq i}\textup{sd}(C)_j)\right)\] 
so there is an induced order-preserving map $$\textup{(sd} F)^{\hspace{1mm}\widehat{ \leq i}} \colon {\textup{sd}}_F^{\hspace{1mm}\widehat{\leq i}} \to {\textup{sd}}_F^{\hspace{1mm}\widehat{\leq i}}.$$

\begin{theorem}[Lefschetz fixed-morphism theorem]\label{fixedmorph}
    Let $C$ be a finite acyclic category and let $F\colon C\rightarrow C$ be a functor. If  $\mathcal{L}(\textup{(sd} F)^{\hspace{1mm}\widehat{ \leq i}})\neq 0$, then for all $1\leq k\leq i+1$ there exists at least one chain of composable morphims $x_0\overset{\alpha_1}{\rightarrow}x_1\overset{\alpha_2}{\rightarrow}\dots\overset{\alpha_k}{\rightarrow} x_k$ in $C$ such that $F(\alpha_j)=\alpha_j$ for $1\leq j\leq k$ and   $F(x_0\overset{\alpha_1}{\rightarrow}x_1\overset{\alpha_2}{\rightarrow}\dots\overset{\alpha_k}{\rightarrow} x_k)=x_0\overset{\alpha_1}{\rightarrow}x_1\overset{\alpha_2}{\rightarrow}\dots\overset{\alpha_k}{\rightarrow} x_k$. Particularly, if $\mathcal{L}(\textup{(sd} F)^{\hspace{1mm}\widehat{ 0}})\neq 0$, then there exists at least one non-identity morphism $\alpha$ in $C$ such that  $F(\alpha)=\alpha$,  i.e, $M(C)^F\neq \emptyset$. 
\end{theorem}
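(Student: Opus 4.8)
The plan is to apply the Baclawski--Björner theorem (Theorem~\ref{lefeuler}) to the order-preserving self-map $(\textup{sd}\,F)^{\widehat{\leq i}}\colon {\textup{sd}}_F^{\widehat{\leq i}}\to {\textup{sd}}_F^{\widehat{\leq i}}$ introduced just above the statement, and then to transport the resulting fixed point back to $C$ by means of Lemma~\ref{caraabixo}. First I would record that ${\textup{sd}}_F^{\widehat{\leq i}}$ is a finite poset and $(\textup{sd}\,F)^{\widehat{\leq i}}$ is order-preserving, so Theorem~\ref{lefeuler} gives $\mathcal{L}\bigl((\textup{sd}\,F)^{\widehat{\leq i}}\bigr)=\chi\bigl(({\textup{sd}}_F^{\widehat{\leq i}})^{(\textup{sd}\,F)^{\widehat{\leq i}}}\bigr)$. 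Hence the hypothesis $\mathcal{L}((\textup{sd}\,F)^{\widehat{\leq i}})\neq 0$ forces the fixed poset on the right to be nonempty, so there is some $\sigma\in {\textup{sd}}_F^{\widehat{\leq i}}$ with $(\textup{sd}\,F)^{\widehat{\leq i}}(\sigma)=\sigma$. As $(\textup{sd}\,F)^{\widehat{\leq i}}$ is merely the restriction of $\textup{sd}(F)$, this $\sigma$ is a genuine fixed object of $\textup{sd}(F)$; and since $\sigma\in\textup{sd}(C)^{\widehat{\leq i}}$, its degree in the graded poset $\textup{sd}(C)$ exceeds $i$, so $\sigma$ is a chain $x_0\overset{\alpha_1}{\rightarrow}x_1\overset{\alpha_2}{\rightarrow}\dots\overset{\alpha_n}{\rightarrow} x_n$ of $n\geq i+1$ composable non-identity morphisms of $C$.

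Next I would invoke Lemma~\ref{caraabixo}: since $\textup{sd}(F)(\sigma)=\sigma$, every face $\tau\leq\sigma$ in $\textup{sd}(C)$ is also fixed by $\textup{sd}(F)$. For each $k$ with $1\leq k\leq i+1\leq n$, the prefix chain $\tau_k=\bigl(x_0\overset{\alpha_1}{\rightarrow}x_1\overset{\alpha_2}{\rightarrow}\dots\overset{\alpha_k}{\rightarrow} x_k\bigr)$ is a face of $\sigma$ (obtained by deleting the last $n-k$ vertices), so $\textup{sd}(F)(\tau_k)=\tau_k$. Unwinding the description of $\textup{sd}(F)=\mathcal{F}(\Delta(F))$ turns this equality of $k$-simplices into $F(x_0)\overset{F(\alpha_1)}{\rightarrow}F(x_1)\overset{F(\alpha_2)}{\rightarrow}\dots\overset{F(\alpha_k)}{\rightarrow} F(x_k)=x_0\overset{\alpha_1}{\rightarrow}x_1\overset{\alpha_2}{\rightarrow}\dots\overset{\alpha_k}{\rightarrow} x_k$ as chains of morphisms of $C$; in particular the left-hand chain is $k$-dimensional, so no $F(\alpha_j)$ collapses to an identity, and matching the two chains term by term gives $F(\alpha_j)=\alpha_j$ for $1\leq j\leq k$ (and $F(x_j)=x_j$ for $0\leq j\leq k$, whence $F(\tau_k)=\tau_k$). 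This is exactly the asserted conclusion for that value of $k$. For the ``particularly'' clause I would note that, as all degrees in $\textup{sd}(C)$ are nonnegative, the layers $j\leq 0$ reduce to the single layer $0$, so $(\textup{sd}\,F)^{\widehat{0}}=(\textup{sd}\,F)^{\widehat{\leq 0}}$; applying the statement just proved with $i=0$ and $k=1$ produces a chain $x_0\overset{\alpha_1}{\rightarrow}x_1$ in $C$ with $F(\alpha_1)=\alpha_1$, and $\alpha_1$ is non-identity because it occurs in a nerve chain, so $\alpha_1\in M(C)^F$ and $M(C)^F\neq\emptyset$.

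I expect the only point needing real care to be the bookkeeping in the first two steps: one must make sure that a fixed object of the restricted map $(\textup{sd}\,F)^{\widehat{\leq i}}$ is genuinely a fixed object of $\textup{sd}(F)$ lying in degree $>i$, and that ``$\textup{sd}(F)$ fixes $\tau_k$'' truly pins down each $\alpha_1,\dots,\alpha_k$ individually --- equivalently, that each edge $x_{j-1}\overset{\alpha_j}{\rightarrow}x_j$ of $\sigma$ is a fixed $1$-simplex, which is precisely the content supplied by Lemma~\ref{caraabixo}. Apart from this there is no substantial obstacle: the theorem reduces to Theorem~\ref{lefeuler} applied to the auxiliary poset map, followed by the ``faces of a fixed simplex are fixed'' principle of Lemma~\ref{caraabixo}.
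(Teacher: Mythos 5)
Your proposal is correct and follows essentially the same route as the paper: apply the Baclawski--Björner theorem to the restricted poset map $(\textup{sd}\,F)^{\widehat{\leq i}}$ to obtain a fixed chain of length at least $i+1$, then use Lemma~\ref{caraabixo} to pass to its faces and extract fixed chains of every length $1\leq k\leq i+1$. Your version is slightly more explicit about the bookkeeping (that a fixed point of the restriction is a fixed point of $\textup{sd}(F)$ of degree $>i$, and that matching the fixed simplex term by term pins down each $\alpha_j$), but the argument is the same.
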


\begin{proof}
      Consider the order-preserving map $\textup{(sd} F)^{\hspace{1mm}\widehat{ \leq i}} \colon {\textup{sd}}_F^{\hspace{1mm}\widehat{\leq i}} \to {\textup{sd}}_F^{\hspace{1mm}\widehat{\leq i}}$. 
      By Theorem \ref{lefeuler},  $\textup{(sd} F)^{\hspace{1mm}\widehat{\leq i}}$ has fixed elements.

      The elements of ${\textup{sd}}_F^{\hspace{1mm}\widehat{\leq i}}$ are of the form $x_0\overset{\alpha_1}{\rightarrow}x_1\overset{\alpha_2}{\rightarrow}\dots\overset{\alpha_k}{\rightarrow} x_k$ with $k\geq i+1$ where each $x_i$ is an object of $C$ and each $\alpha_i\colon x_{i-1}\to x_i$ is a morphism or arrow in $C$.     Therefore, since $(\textup{sd} F)^{\hspace{1mm}\widehat{\leq i}}$ fixes an element $x_0\overset{\alpha_1}{\rightarrow}x_1\overset{\alpha_2}{\rightarrow}\dots\overset{\alpha_k}{\rightarrow} x_k$  in ${\textup{sd}}_F^{\hspace{1mm}\widehat{\leq i}}$, then $F$ fixes at least one chain of morphisms of length $k$ in $C$. By Lemma \ref{caraabixo}, $(\textup{sd} F)^{\hspace{1mm}\widehat{\leq i}}$ fixes  $x_1\overset{\alpha_2}{\rightarrow}x_2 \dots \overset{\alpha_k}{\rightarrow} x_k$, $x_0\overset{\alpha_1}{\rightarrow}x_1 \dots \overset{\alpha_{k-1}}{\rightarrow} x_{k-1}$ and $x_0\overset{\alpha_1}{\rightarrow}x_1 \dots x_{j-1}\overset{\alpha_{j+1}\circ \alpha_j}{\longrightarrow} x_{j+1} \dots\overset{\alpha_k}{\rightarrow} x_k$ in ${\textup{sd}}_F^{\hspace{1mm}\widehat{\leq i}}$ for any $1\leq j\leq k-1$. Applying Lemma \ref{caraabixo} repeatedly, it follows that for all $1\leq k\leq i+1$ there exists at least one chain of composable morphims $x_0\overset{\alpha_1}{\rightarrow}x_1\overset{\alpha_2}{\rightarrow}\dots\overset{\alpha_k}{\rightarrow} x_k$ in $C$ such that $F(\alpha_j)=\alpha_j$ for $1\leq j\leq k$ and   $F(x_0\overset{\alpha_1}{\rightarrow}x_1\overset{\alpha_2}{\rightarrow}\dots\overset{\alpha_k}{\rightarrow} x_k)=x_0\overset{\alpha_1}{\rightarrow}x_1\overset{\alpha_2}{\rightarrow}\dots\overset{\alpha_k}{\rightarrow} x_k$.
\end{proof}

\begin{example}
   Let us consider the category and functor presented in Figure \ref{fig:fixedmorph}. Computing $\mathcal{L}((\textup{sd} F)^{\widehat{0}})=4-3=1$ we determine that $F$ has fixed arrows (for example $\beta$ and $\gamma \circ \beta$).
    \begin{figure}[h!]
        \centering
        \includegraphics[width=1\textwidth]{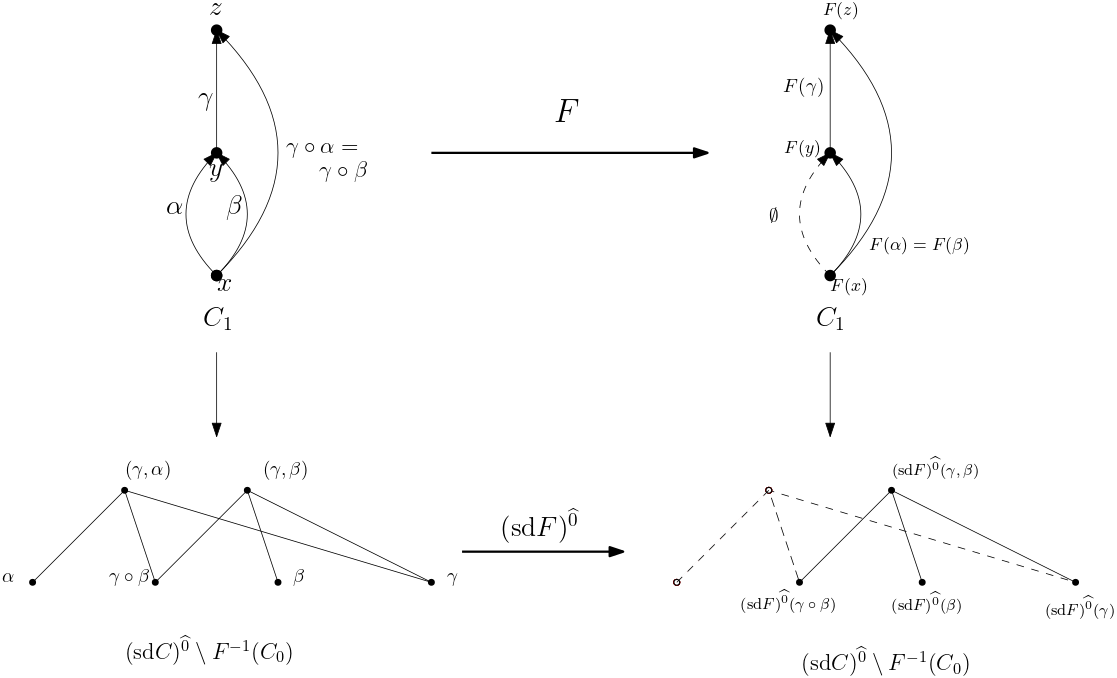}
        \caption{Example of Theorem \ref{fixedmorph}.}
        \label{fig:fixedmorph}
    \end{figure}
\end{example}

\newpage






\begin{thebibliography}{99}




\bibitem{Babson} Babson, E. and Kozlov, D. \emph{Group actions on posets}. J. Algebra, 285, no. 2, 439--450. 2005.


\bibitem{Bjorner} Backlawski, K. and Björner, A. \emph{Fixed Points in Partially Ordered Sets}. Advances in Mathematics, Vol. 31, Issue 3, 263-287. 1979.\label{bjor}


\bibitem{Delucchi} d'Antonio, G. and 
Delucchi, E. \emph{Minimality of toric arrangements}. J. Eur. Math. Soc. 17, No. 3, 483--521. 2015. 

\bibitem{Delucchi2} Delucchi, E. and Pagaria, R. \emph{
The homotopy type of elliptic arrangements}. Algebr. Geom. Topol. 21, No. 4, 2037--2063. 2021.





\bibitem{Kozlov} Kozlov, D. \emph{Combinatorial Algebraic Topology}. Algorithms and Computations in Mathematics. Springer. 2008.
\bibitem{May} May, J. P. \emph{Finite spaces and larger contexts}. Draft Book Avaliable at \href{https://math.uchicago.edu/~may/FINITE/FINITEBOOK/FINITEBOOKCollatedDraft.pdf}{https://math.uchicago.edu/~may/FINITE/FINITEBOOK/FINITEBOOKCollatedDraft.pdf}

\bibitem{Tanaka} Tanaka, K. \emph{Lusternik–Schnirelmann category for categories and classifying spaces}. Topology Appl. 239, 65--80. 2018.





\end{thebibliography}
\end{document}